\newcommand{\R}{\mathbb{R}}
\newcommand{\eps}{\epsilon}
\newcommand{\mcR}{\mathcal{R}}
\newcommand{\mfR}{\mathfrak{R}}
\newcommand{\mcW}{\mathcal{W}}
\newcommand{\mcL}{\mathcal{L}}
\newtheorem{theorem}{Theorem}
\newtheorem{lemma}[theorem]{Lemma}
\theoremstyle{definition}
\title{\Huge{Arbitrarily large solutions of the Vlasov-Poisson system}}
\author{Jonathan Ben-Artzi\\ {\small School of Mathematics}\\ {\small Cardiff University}\\ {\small Cardiff, United Kingdom}\\ {\small \tt Ben-ArtziJ@cardiff.ac.uk} \\[1cm]
Simone Calogero\\{\small Department of Mathematics}\\ {\small Chalmers Institute of Technology, University of Gothenburg}\\ {\small Gothenburg, Sweden}\\ {\small \tt calogero@chalmers.se}\\[1cm]
Stephen Pankavich \\ {\small Department of Applied Mathematics and Statistics}\\ {\small Colorado School of Mines}\\ {\small Golden, CO USA}\\ {\small \tt pankavic@mines.edu} }
\date{\today}
\begin{document}
\maketitle

\begin{abstract}
We study smooth, global-in-time solutions of the Vlasov-Poisson system in the plasma physical case that possess arbitrarily large charge densities and electric fields. In particular, we construct two classes of solutions with this property. The first class are spherically-symmetric solutions that initially possess arbitrarily small density and field values, but attain arbitrarily large values of these quantities at some later time. Additionally, we construct a second class of spherically-symmetric solutions that possess any desired mass and attain arbitrarily large density and field values at any later prescribed time.
\end{abstract}

\section{Introduction}
\subsection{The main results}
In the classical limit (i.e., as the speed of light $c \to \infty$) the motion of a monocharged, collisionless plasma is given by the Vlasov-Poisson (VP) system:
 \begin{equation}\label{VP}
\left \{ \begin{aligned}
& \partial_{t}f+v\cdot\nabla_{x}f+E \cdot\nabla_{v}f=0\\
& \rho(t,x)=\int_{\mathbb{R}^3} f(t,x,v)\,dv\\
& E(t,x) = \int_{\mathbb{R}^3} \frac{x-y}{\vert x - y \vert^3} \rho(t,y) \ dy.
\end{aligned} \right .
\end{equation}
Here, $t \geq 0$ represents time, $x \in \mathbb{R}^3$ is position, and $v \in \mathbb{R}^3$ represents momentum.  Additionally, $f(t,x,v) \geq 0$ is the particle density, $\rho(t,x)$ is the associated charge density, $E(t,x)$ is the self-consistent electric field generated by the charged particles, and we have chosen units such that the mass and charge of each particle are normalized to one.
In the present paper, we consider the Cauchy problem and therefore require given initial data
$$f(0,x,v) = f_0(x,v) \geq 0$$
to complete the description of the system. 
We refer to \cite{Glassey} as a general reference to provide background for this well-known plasma model, but one important property that will be utilized throughout this paper is the {\it a priori} conservation of total mass of the system, namely
$$M = \iint_{\mathbb{R}^6} f(t,x,v) \ dv dx = \iint_{\mathbb{R}^6} f_0(x,v) \ dv dx.$$

In this paper we prove that over intermediate timescales, solutions to \eqref{VP} can give rise to charge densities and electric fields that become arbitrarily large. More specifically, our first main result shows that one may construct solutions of \eqref{VP} whose density and field are initially as small as desired, but which become arbitrarily large at some later time.
\begin{theorem}
\label{T1}
For any constants $C_1, C_2 > 0$ there exists a smooth, spherically symmetric solution of the Vlasov-Poisson system such that
$$ \Vert \rho(0) \Vert_\infty, \quad \Vert E(0) \Vert_\infty  \leq C_1$$
but for some time $T > 0$,
$$ \Vert \rho(T) \Vert_\infty, \quad \Vert E(T) \Vert_\infty  \geq C_2.$$
\end{theorem}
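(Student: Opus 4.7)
The plan is to exploit the time-reversibility of the Vlasov--Poisson system together with an outward-dispersion mechanism available in the spherically symmetric plasma case. It is easier to construct a solution whose density and field start large and decrease through outward dispersion; one then reverses time to obtain a solution that starts small and rises to large values. Concretely, I will first build a smooth, spherically symmetric solution $g$ on $[0,T]$ with $\Vert \rho_g(0) \Vert_\infty, \Vert E_g(0) \Vert_\infty \geq C_2$ and $\Vert \rho_g(T) \Vert_\infty, \Vert E_g(T) \Vert_\infty \leq C_1$, and then set $f(t,x,v) := g(T-t, x, -v)$. A direct check using \eqref{VP} shows that $f$ is again a smooth, spherically symmetric solution of Vlasov--Poisson, with $\rho_f(t,\cdot) = \rho_g(T-t, \cdot)$ and $E_f(t,\cdot) = E_g(T-t,\cdot)$; this swaps the roles of $t=0$ and $t=T$ and yields the theorem.

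For the dispersing solution I take smooth, spherically symmetric initial data $g_0$ supported on a thin annulus $r_1 \leq |x| \leq r_2$ in which every particle carries strictly outward radial velocity $v_r := v \cdot x/|x| \in [v_0 - \sigma, v_0 + \sigma]$ with $v_0 > \sigma > 0$, and small tangential component $v_\perp := \sqrt{|v|^2 - v_r^2} \leq \sigma$. By selecting the amplitude and total mass appropriately, I arrange $\Vert \rho_{g_0} \Vert_\infty \geq C_2$; using $|E_{g_0}(r)| = M_\mathrm{enc}(r)/r^2$ in spherical symmetry, also $\Vert E_{g_0} \Vert_\infty \geq C_2$. Global-in-time existence and smoothness of $g$ follow from the classical theory for spherically symmetric, compactly supported data.

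The heart of the argument is a pointwise dispersion estimate for $g$. A direct computation shows that along each characteristic
\[
\dot{v}_r = \frac{v_\perp^2}{r} + E(r),
\]
and in the plasma case $E(r) = M_\mathrm{enc}(r)/r^2 \geq 0$, so $v_r$ is monotone nondecreasing. Hence every particle launched from $\mathrm{supp}(g_0)$ retains $v_r \geq v_0 - \sigma$ and satisfies $r(t) \geq r_1 + (v_0 - \sigma) t$; in particular $\mathrm{supp}_x g(t, \cdot, \cdot) \subset \{|x| \geq r_1 + (v_0 - \sigma)t\}$. Combining this with conservation of energy (which bounds $|v|$ along each trajectory) and angular momentum (which gives $v_\perp \leq C/r$), a Liouville-based estimate on the velocity-support volume at each $(t,x)$ yields
\[
\Vert \rho_g(t) \Vert_\infty \lesssim (v_0 t)^{-2}, \qquad \Vert E_g(t) \Vert_\infty \lesssim (v_0 t)^{-2}
\]
for $t$ large, with implicit constants depending on $\Vert g_0 \Vert_\infty$, the initial support, and the total mass. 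Choosing $v_0 T$ sufficiently large then forces both quantities below $C_1$ at time $T$, completing the construction.

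The main obstacle is the pointwise (rather than averaged) nature of the dispersion estimate: one must rule out the possibility that some positive mass of particles reverses direction and reconcentrates at a later time. This is precisely where the plasma sign of the interaction is essential, since $\dot v_r \geq 0$ along every trajectory and every initial velocity lies in the ``outgoing'' half $\{v_r > 0\}$ of phase space, so no characteristic can ever turn around. Once this monotonicity is in hand, the remaining bound on the velocity-support volume at fixed $(t,x)$ reduces to a bookkeeping computation using the two standard conservation laws of spherically symmetric trajectories.
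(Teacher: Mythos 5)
Your proposal is essentially correct, but it takes a genuinely different route from the paper. The paper works directly with an \emph{imploding} configuration: data supported on a thin shell near $r=a_0$ with large inward radial velocity $w\approx a_1<0$, chosen so close to a ``homogeneous collapse'' profile (condition \eqref{suppcond}) that all characteristics remain inward-moving up to a common time $T$ and land inside a ball of radius $O(\eps^2)$; the technical work (Lemmas \ref{L1}--\ref{L2}) is precisely to control the turning time $T_0$ of each convex trajectory and to verify $T<T_0$ uniformly on the support, after which Lemma \ref{L3} converts the spatial confinement into lower bounds on $\rho$ and $E$. You instead build an \emph{exploding} solution and invoke time-reversal symmetry $f(t,x,v)=g(T-t,x,-v)$, which is a legitimate symmetry of \eqref{VP} and correctly swaps the roles of $t=0$ and $t=T$. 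Your route buys simplicity exactly where the paper has to work hardest: for outgoing data in the repulsive case, $\dot w=\ell/r^3+m(t,r)/r^2\ge 0$, so no trajectory ever turns around, the support propagates outward linearly, and there is no analogue of the delicate ``all particles turn around after time $T$'' estimate. What the paper's direct construction buys in exchange is explicit quantitative control of the collapsed configuration (radius $100\eps^2$, explicit $T$), which is what allows the same machinery to be reused for Theorem \ref{T2}, where $T$ and the mass are prescribed in advance and time reversal would not obviously help. One imprecision to fix in your write-up: individual particle energy is \emph{not} conserved (the potential is time-dependent), so you cannot literally cite ``conservation of energy'' to bound the radial-velocity spread. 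The correct substitute is to integrate $\dot w \le \ell_{\max}/r^3 + M/r^2$ along a trajectory with $r(s)\ge r_1+(v_0-\sigma)s$, which shows $w$ increases by at most a fixed finite amount $\Delta$; together with angular-momentum conservation (which confines $v_\perp$ to a disk of radius $\sqrt{\ell_{\max}}/r$) this gives the velocity-support volume $O(r^{-2})$ and hence your claimed $\Vert\rho_g(t)\Vert_\infty\lesssim (v_0t)^{-2}$, while $\Vert E_g(t)\Vert_\infty\le M/(r_1+(v_0-\sigma)t)^2$ is immediate from \eqref{fieldang}. With that repair the argument closes.
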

This theorem is inspired by a similar result obtained by Rein \& Taegert \cite{RT} for spherically symmetric solutions of the gravitational Vlasov-Poisson system. We note, however, that the convex, rather than concave, nature of the spatial characteristics in the plasma case gives rise to drastically different particle behavior, and therefore we must use new tools and a different argument within the proof. 

The next main result removes the condition on initial data and shows that one may construct solutions possessing any desired mass and whose density and field are arbitrarily large at any given time.
\begin{theorem}
\label{T2}
For any constants $C_1, C_2> 0$ and any $T > 0$ there exists a smooth, spherically symmetric solution of the Vlasov-Poisson system such that
$$ M = \iint_{\mathbb{R}^6} f_0(x,v) \ dv dx = C_1$$
and
$$ \Vert \rho(T) \Vert_\infty, \quad  \Vert E(T) \Vert_\infty  \geq C_2.$$
\end{theorem}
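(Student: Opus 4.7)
The plan is to derive Theorem~\ref{T2} from Theorem~\ref{T1} via the natural two-parameter scaling symmetry of \eqref{VP}. A direct computation shows that for any $\alpha,\gamma>0$, if $f$ solves \eqref{VP} then so does
\[
\tilde f(t,x,v)=\alpha^{2}\gamma^{3}\,f(\alpha t,\alpha\gamma x,\gamma v),
\]
with
\[
\tilde\rho(t,x)=\alpha^{2}\rho(\alpha t,\alpha\gamma x),\qquad \tilde E(t,x)=(\alpha/\gamma)\,E(\alpha t,\alpha\gamma x),\qquad \tilde M=\alpha^{-1}\gamma^{-3}M.
\]
These two free parameters provide exactly enough freedom to prescribe both the total mass and the time at which the peak amplitude is attained.

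Given the target data $C_1,C_2,T>0$ in the statement of Theorem~\ref{T2}, I would first invoke Theorem~\ref{T1} with its $C_1$ taken to be an arbitrary fixed positive value (say $1$) and its $C_2$ taken to be a large auxiliary parameter $N$ to be chosen. This produces a smooth, spherically symmetric solution $f$ of some total mass $M$ and a time $T^{*}>0$ at which $\|\rho(T^{*})\|_\infty,\|E(T^{*})\|_\infty\ge N$. I then apply the scaling above with
\[
\alpha=\frac{T^{*}}{T},\qquad \gamma=\Bigl(\frac{MT}{C_{1}T^{*}}\Bigr)^{\!1/3},
\]
which forces $\tilde M=C_{1}$ by the mass transformation rule, and forces the large-amplitude time of $\tilde f$ to equal $T$ since $\alpha T=T^{*}$. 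At this target time,
\[
\|\tilde\rho(T)\|_\infty\ge \alpha^{2}N=\Bigl(\tfrac{T^{*}}{T}\Bigr)^{\!2}\!N,\qquad \|\tilde E(T)\|_\infty\ge(\alpha/\gamma)N=\Bigl(\tfrac{T^{*}}{T}\Bigr)^{\!4/3}\!\Bigl(\tfrac{C_{1}}{M}\Bigr)^{\!1/3}\!N.
\]

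The main obstacle is the final step: verifying that, for some sufficiently large $N$, both lower bounds exceed $C_{2}$. The values $T^{*}=T^{*}_{N}$ and $M=M_{N}$ depend on $N$ through the application of Theorem~\ref{T1}, and what must be checked is that $N(T^{*}_{N})^{2}\to\infty$ and $N(T^{*}_{N})^{4/3}M_{N}^{-1/3}\to\infty$ as $N\to\infty$. Since the initial density bound in the construction behind Theorem~\ref{T1} is the fixed constant $1$ independently of $N$, the standard short-time propagation estimates for \eqref{VP} should prevent $T^{*}_{N}$ from collapsing to zero, yielding a uniform lower bound $T^{*}_{N}\ge c_{0}>0$. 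Combined with at-worst polynomial growth of $M_{N}$ in $N$ (which can be read off from the construction used to prove Theorem~\ref{T1}), both lower bounds tend to infinity with $N$, and any sufficiently large $N$ completes the proof; the rescaled solution $\tilde f$ is then the desired witness for Theorem~\ref{T2}.
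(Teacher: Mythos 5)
Your scaling symmetry is correctly computed: $\tilde f(t,x,v)=\alpha^{2}\gamma^{3}f(\alpha t,\alpha\gamma x,\gamma v)$ does solve \eqref{VP} with $\tilde\rho=\alpha^{2}\rho$, $\tilde E=(\alpha/\gamma)E$ and $\tilde M=\alpha^{-1}\gamma^{-3}M$, and spending the two parameters to normalize the mass and the focusing time is a sensible plan. The gap is exactly where you flag it, and it is not repairable in the form you propose. First, as a matter of logic, the statement of Theorem~\ref{T1} gives no lower bound on $T^{*}$ in terms of $N$, so nothing can be concluded about $N(T^{*}_{N})^{2}$ from the statement alone. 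Second, your proposed rescue --- a uniform lower bound $T^{*}_{N}\geq c_{0}>0$ from short-time propagation estimates --- is false: the data behind Theorem~\ref{T1} have $\Vert\rho(0)\Vert_\infty\leq 1$ but velocity support of size $|v|\sim\eps^{-2}$, and there is no universal lower bound on the time needed for the density to concentrate when the velocities are unbounded; the entire point of that construction is that concentration occurs at time $T^{*}\approx a_0\eps^{2}\to 0$.

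Quantitatively, in the construction behind Theorem~\ref{T1} the shell radius $a_0$ is fixed (it depends only on the initial smallness constant, here $1$), the mass is $M\sim\eps^{3}$ by \eqref{mass}, and the guaranteed density at the focusing time is of order $\eps^{-3}$, so the largest $N$ the construction certifies obeys $N\lesssim\eps^{-3}$. Hence
\[
N\,(T^{*}_{N})^{2}\;\lesssim\;\eps^{-3}\,(a_0\eps^{2})^{2}\;=\;a_0^{2}\,\eps\;\longrightarrow\;0,
\]
so your rescaled density bound $\alpha^{2}N=(T^{*}_{N}/T)^{2}N$ tends to zero rather than infinity. (Your field bound happens to survive, since $N(T^{*})^{4/3}M^{-1/3}\sim\eps^{-4/3}$, but the theorem requires both quantities to be large.) The correct degeneration is to send the shell radius to infinity while tying the focusing time to $T$, which is what the paper's separate proof of Theorem~\ref{T2} does by setting $a_0=\eps^{-2}(T+\eta)$ with $M=C_1$ built into the data class $\mathfrak{K}$; in your language this corresponds to rescaling the Theorem~\ref{T1} family with $a_0\to\infty$ and $\eps$ essentially fixed, not with $a_0$ fixed and $\eps\to 0$.
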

This result complements H\"{o}rst's decay theorem \cite{Horst}, which states that the density and field generated by any spherically-symmetric solution of \eqref{VP} must obey sharp asymptotic decay estimates (see discussion below). Our result, on the other hand, demonstrates that the time needed for solutions to transition from their intermediate asymptotic behavior, during which they may attain large values, to their final asymptotic behavior can be made arbitrarily large, even if the total mass is taken to be small.
Finally, we remark that because we construct spherically symmetric solutions of \eqref{VP}, they are also solutions of the nonrelativistic Vlasov-Maxwell system. Hence, Theorems \ref{T1} and \ref{T2} display the intermediate behavior of solutions to this system as well.

\subsection{Background and previous results}
It is well-known that given smooth initial data, the Vlasov-Poisson system \eqref{VP} possesses a smooth global-in-time solution \cite{LP, Pfaf}.  A remaining open question concerns the large-time asymptotic behavior of the system; more specifically, 
whether for $t>0$ sufficiently large there are $C, a, b > 0$ such that
$$\Vert \rho(t) \Vert_\infty \leq C(1+t)^{-a}, \qquad \Vert E(t) \Vert_\infty \leq C(1+t)^{-b}.$$
Of course, one would expect the repulsive nature of the electrostatic interaction to cause particles to separate rapidly, with the optimal rates of $a = 3$ and $b=2$ resulting from velocity averaging and particle dispersion, but a proof of such a result has remained elusive. To date, the best decay estimate for \eqref{VP} occurs within \cite{Yang} and yields $b = \frac{1}{6}$ with no associated estimate for the charge density. 

In the case of spherically-symmetric initial data $f_0$, the solution $f(t)$ is known to remain spherically-symmetric and an affirmative answer to the asymptotic behavior question was provided by H\"{o}rst \cite{Horst}.  Within this paper, it was shown that both terms decay for large time with associated exponents $a = 3$ and $b = 2$.
Results regarding the large time behavior of solutions to the (repulsive) Vlasov-Poisson system exist in other special cases, including small data \cite{BD}, the problem posed on the spatial torus \cite{VM}, and in a one-dimensional setting \cite{BKR, GPS, GPS2, Sch}.
Additionally, Illner \& Rein proved in \cite{IR} that both the potential energy of the system \eqref{VP} and $\Vert \rho(t) \Vert_{5/3}$ decay to zero as $t \to \infty$. In the attractive (gravitational) case, the asymptotic structure of solutions is much more complicated and partial results have been provided in~\cite{C} and \cite{RT}.

As mentioned above, this paper was inspired by \cite{RT}. There, the authors establish a result  similar to our Theorem 1 for the gravitational case, i.e. where the force is attractive. Their proof relies on a careful analysis of individual particle trajectories, which are all concave (intuitively they should all collapse toward the origin, though this is not achieved in finite time). They compare these trajectories with trajectories of an explicit spatially-homogeneous solution, and show both solutions possess a common core where they agree.



\subsection{Spherically symmetric coordinates}
Since we will be working with spherically-symmetric solutions, it will be useful to consider new variables that completely describe solutions with such symmetry.  In particular, defining the spatial radius, inward velocity, and square of the angular momentum by
\begin{equation}
\label{ang}
r = \vert x \vert, \qquad w = \frac{x \cdot v}{r}, \qquad \ell = \vert x \times v \vert^2,
\end{equation}
the spherical-symmetry of $f_0$ implies that the distribution function, charge density, and electric field take special forms. Namely, $f = f(t,r,w,\ell)$ satisfies the reduced Vlasov equation
\begin{equation}
\label{vlasovang}
\partial_{t}f+w\partial_r f+\left ( \frac{\ell}{r^3} + \frac{m(t,r)}{r^2} \right ) \partial_w f=0
\end{equation}
where
\begin{equation}
\label{massang}
m(t,r) = 4\pi \int_0^r s^2 \rho(t,s) \ ds
\end{equation}
and 
\begin{equation}
\label{rhoang}
\rho(t,r) = \frac{\pi}{r^2} \int_0^\infty \int_{-\infty}^{\infty} f(t,r,w,\ell) \ dw \ d\ell.
\end{equation}
The electric field is given by the expression
\begin{equation}
\label{fieldang}
E(t,x) = \frac{m(t,r)}{r^2} \frac{x}{r}.
\end{equation}
Whenever necessary, we will abuse notation so as to use both Cartesian and angular coordinates to refer to functions;  for instance the particle density $f$ will be written both as $f(t,x,v)$ and $f(t,r,w,\ell)$.

In the angular coordinates described above, the characteristics of the Vlasov equation also assume a reduced form, namely
\begin{equation}
\label{charang}
\left \{
\begin{aligned}
&\frac{d}{ds}\mcR(s)=\mcW(s),\\
&\frac{d}{ds}\mcW(s)= \frac{\mcL(s)}{R(s)^3} + \frac{m(s, \mcR(s))}{\mcR(s)^2},\\
&\frac{d}{ds}\mcL(s)= 0.
\end{aligned}
\right.
\end{equation}
We will study forward characteristics of the system with initial conditions
$$\mcR(0) = r, \qquad \mcW(0) = w, \qquad \mcL(0) = \ell$$
and note that the traditional convention for notational has been shortened so that
$$\mcR(s) = \mcR(s,0,r,w,\ell), \qquad \mcW(s) = \mcW(s,0,r,w,\ell), \qquad \mcL(s) = \mcL(s,0,r,w,\ell).$$
In particular, because the angular momentum of particles is conserved in time on the support of $f(t)$, we note that $\mcL(s) = \ell$ for every $s \geq 0$.
Throughout, we will estimate particle behavior on the support of $f$, thus for convenience we define for all $t \geq 0$
$$S(t) = \{ (r,w,\ell) : f(t,r,w,\ell) > 0\}$$
so that, in particular
$$S(0) = \{ (r,w,\ell) : f_0(r,w,\ell) > 0\}.$$

\subsection{Paper organization}
The proofs of Theorems~\ref{T1} and \ref{T2} are contained within Section~\ref{proofs}, while Section~\ref{lemmas} is devoted to proving some technical lemmas. 
Additionally, we remark that a theorem similar to these, but allowing for a given initial kinetic energy of any size while generating an arbitrarily large charge density and electric field at some later time, can also be established using our methods. This will be clearer from the proofs of these results, and the implied relationship between the particle positions and momenta on $S(0)$, and the time $T>0$.

\section{Proof of the main results}
\label{proofs}

\subsection{Class of initial data}
We begin by defining two classes of functions --  $\mathfrak{J}$ and $\mathfrak{K}$ -- from which initial data will be chosen to prove Theorems 1 and 2, respectively.

Given $a_0>0$, $a_1<0$ and $\epsilon>0$, we define the class $\mathfrak{J}(a_0,a_1,\epsilon)$ of initial data for the Vlasov-Poisson system to consist of the functions $f_0\in C^1_c(\R^6; [0,\infty))$, such that
\begin{enumerate}
\item The initial distribution $f_0$ is spherically symmetric. In particular, $f_0=f_0(r,w,\ell)$;
\item For every $(r,w,\ell) \in S(0)$, we have
\begin{equation}
\label{suppcond}
\left (r + \frac{a_0}{\vert a_1 \vert}w \right )^2 + \ell r^{-2} \left (\frac{a_0}{a_1} \right)^2 < \frac{\eps^2}{a_1^2}
\end{equation}
and
\begin{equation}
\label{suppcond2}
a_0-\delta_r< r<a_0+\delta_r \quad \mathrm{with}  \quad \delta_r=\epsilon^3;
\end{equation}
\item The initial charge density $\rho_0=\int f_0\,dv$ satisfies
\begin{equation}
\label{boundrho}
\rho_0(r)\leq \frac{3}{4\pi a_0^3},\qquad\forall r > 0,
\end{equation}
and
\begin{equation}
\label{equalrho}
\rho_0(r) = \frac{3}{4\pi a_0^3},\qquad \text{for }r \in \left [a_0 - \frac{1}{2}\delta_r, a_0 + \frac{1}{2} \delta_r \right ].
\end{equation}

\end{enumerate}

Next, given $a_0>0$, $a_1<0$, $\epsilon>0$ and $M>0$, let the class $\mathfrak{K}(a_0,a_1,\epsilon,M)$ of initial data for the Vlasov-Poisson system consist of the functions $f_0\in C^1_c(\R^6; [0,\infty))$ that satisfy Conditions 1 and 2 above, and in addition, possess total mass equal to $M$:
\[
\iint_{\R^6}f_0(x,v)\,dv\,dx=M.
\]
The solutions of Theorem 1 will be constructed by choosing data in the class $\mathfrak{J}$, while the solutions in Theorem 2 are launched by data in the class $\mathfrak{K}$.
We note that \eqref{suppcond} enforces the construction of data that is arbitrarily close to a particle distribution that gives rise to a spatially-homogeneous (i.e., $\rho_0(r)$ is independent of $r$) solution of \eqref{VP}.  However, \eqref{suppcond2} imposes that the spatial support of the data be contained within a spherical shell with radius centered about $a_0$. In the proof of Theorems 1 and 2, this is essential as it ensures that particles may not approach the origin too quickly, which would cause them to disperse and decrease their density. Finally, \eqref{boundrho} guarantees that the density of the data on this spherical shell is bounded above by data that launches an associated spatially-homogeneous solution.

Notice that~\eqref{suppcond} further implies
\begin{equation}
\label{suppcondrw}
\left \vert r + \frac{a_0}{\vert a_1 \vert}w \right \vert < \frac{\eps}{|a_1|}
\end{equation}
and
\begin{equation}
\label{suppcondl}
\ell  < \left (\frac{r}{a_0} \right )^2 \eps^2
\end{equation}
on $S(0)$ for $f_0\in\mathfrak{J}\cup\mathfrak{K}$.
Additionally, as $a_1<0$, the support condition~\eqref{suppcond} further implies $w \in (a_1 - \delta_w, a_1 + \delta_w)$ on $S(0)$ for $f_0\in\mathfrak{J}\cup\mathfrak{K}$, where 
\begin{equation}\label{deltaw}
\delta_w = \frac{\vert a_1 \vert \delta_r + \eps}{a_0}.
\end{equation}
Hence, all particles possess an initial inward velocity belonging to this interval.

To validate our choice of initial data, we show that $\mathfrak{J}$ and $\mathfrak{K}$ are not empty. Following~\cite{RT},
let $H:[0,\infty) \to [0,\infty)$ be any function satisfying
$$\int_{\mathbb{R}^3} H(\vert u \vert^2) \ du = \frac{3}{4\pi}$$
with $supp(H) \subset [0,1]$.
We rescale this function for any $\eps > 0$ by defining
$$H_\eps(\vert u \vert^2) = \frac{1}{\eps^3} H\left (\frac{\vert u \vert^2}{\eps^2} \right )$$
so that 
$$\int_{\mathbb{R}^3} H_\eps(\vert u \vert^2) \ du = \frac{3}{4\pi}$$
and
$supp(H_\eps) \subset [0,\eps^2].$
Further, for every $\eps > 0$, $x,v \in \mathbb{R}^3$, $a_0>0$, and $a_1<0$ define
$$h_\eps(x,v) = H_\eps(\vert a_1 x - a_0 v \vert^2).$$
It follows that
\[
\int_{\mathbb{R}^3} h_\eps(x,v) \ dv = \frac{3}{4\pi a_0^3}
\]
for every $x \in \mathbb{R}^3$. 
We also choose a cut-off function $\phi \in C^\infty\left ((0,\infty); [0,1]\right )$ satisfying
\[
\left \{
\begin{array}{ll}
& \phi(r) = 0 \quad \mathrm{for} \quad r \not\in [a_0 - \delta_r, a_0 + \delta_r],\\
& \phi(r) = 1 \quad \mathrm{for} \quad r \in (a_0 - \frac{1}{2}\delta_r, a_0 + \frac{1}{2}\delta_r),\quad \mathrm{with} \quad \delta_r=\epsilon^3.
\end{array}
\right.
\]
Then, we claim that
$$f_0(x,v) := h_\eps(x,v) \phi(\vert x \vert)\in\mathfrak{J},\quad \tilde{f}_0(x,v):=M\frac{f_0(x,v)}{\|f_0\|_{L^1}}\in\mathfrak{K}.$$
Indeed, from the upper bound on the support of $H_\eps$, we have on the support of $f_0(x,v)$ the inequality
$$\vert a_1 x - a_0 v \vert^2 < \eps^2.$$
Using the angular coordinates of \eqref{ang} and dividing by $a_1^2 \neq 0$, this can be seen to be equivalent to~\eqref{suppcond}. It is straightforward to verify that $f_0$ and $\tilde{f}_0$ satisfy the remaining properties in the definitions of $\mathfrak{J}$ and $\mathfrak{K}$, as the conditions on $\phi$ imply 
$$ \int_{\mathbb{R}^3} f_0(x,v) \ dv = \left (\int_{\mathbb{R}^3} h_\eps(x,v) \ dv \right) \phi (|x|) = \frac{3}{4\pi a_0^3} \phi(|x|)$$ 
and thus \eqref{suppcond2}-\eqref{equalrho} hold. We also remark that since initial data in $\mathfrak{J}\cup\mathfrak{K}$ are spherically-symmetric, they must give rise to global-in-time, spherically-symmetric solutions of \eqref{VP}.

\subsection{Proof of Theorem~\ref{T1}}
In the first result we choose $f_0\in\mathfrak{J}(a_0,a_1,\epsilon)$. The parameter $a_0$ will be fixed and we may choose $\vert a_1 \vert$ sufficiently large, $\eps$ sufficiently small, and $T$ sufficiently small so that particles are quickly concentrated near the origin and obtain radial positions as small as one desires, thereby causing the density and field to become arbitrarily large at time $T > 0$.

\begin{proof}
Let $C_1, C_2>0$ be given, and define the constant
$$a_0 = \left (\frac{32}{C_1} \right )^{1/3}.$$
Let $\eps > 0$ satisfy
$$\eps < \min\left \{1,\frac{1}{4}a_0, \frac{1}{200^3a_0 C_2} \right \}.$$ 
%
and set $$ a_1 = -\frac{1}{\eps^2} \qquad \mathrm{and} \qquad T = \frac{a_0}{\vert a_1 \vert} - 20\eps^4.$$
We note that the upper bounds on $\eps$ imply $T > 0$.
Along with the condition $\delta_r = \eps^3$, these choices imply (see \eqref{deltaw}) $$ \qquad \delta_w = \frac{2\eps}{a_0}.$$
With this, \eqref{boundrho} implies that the total mass obeys the following upper bound
\begin{eqnarray*}
M &=& \int_{\mathbb{R}^3} \rho_0(x) \ dx=4\pi \int_{a_0-\delta_r}^{a_0+\delta_r}\rho_0(r) r^2\,dr\\
& \leq & \frac{1}{a_0^3} \left [ (a_0+\delta_r)^3 - (a_0 - \delta_r)^3 \right ]\\
& = & \frac{6\delta_r}{a_0} +  \frac{2\delta_r^3}{a_0^3} \leq \frac{8\delta_r}{a_0} = \frac{8\eps^3}{a_0},
\end{eqnarray*}
%
while \eqref{equalrho} implies that $M$ has the following lower bound
\begin{eqnarray*}
M &\geq& 4\pi \int_{a_0-\frac{1}{2}\delta_r}^{a_0+\frac{1}{2}\delta_r}\rho_0(r) r^2\,dr\\
& \geq & \frac{1}{a_0^3} \left [ (a_0+\frac{1}{2}\delta_r)^3 - (a_0 - \frac{1}{2}\delta_r)^3 \right ]\\
& = & \frac{3\delta_r}{a_0} +  \frac{\delta_r^3}{4a_0^3} \geq  \frac{3\delta_r}{a_0} = \frac{3\eps^3}{a_0}.
\end{eqnarray*}
Thus, we find
\begin{equation}
\label{mass}
3a_0^{-1}\eps^3 \leq M \leq 8a_0^{-1} \eps^3
\end{equation}
and, in particular, this implies $M\leq 8a_0^{-1}$.
On $S(0)$, the upper bounds on $\eps$ further imply
\begin{equation}
\label{rw}
\left \{
\begin{gathered}
\frac{1}{2}a_0 < a_0 - \delta_r < r  < a_0 + \delta_r < \frac{3}{2}a_0\\
-\frac{3}{2}\eps^{-2} < a_1 - \delta_w < w < a_1 + \delta_w < -\frac{1}{2} \eps^{-2}.
\end{gathered}
\right.
\end{equation}
Additionally, \eqref{rw} combined with \eqref{suppcondl} implies a uniform upper bound on the angular momentum on $S(0)$, namely
\begin{equation}
\label{l}
\ell < \left (\frac{3}{2} \right )^2 \eps^2 \leq 3.
\end{equation}

To prove the conclusions of the theorem at time zero, we first notice that by~\eqref{boundrho}
$$\Vert \rho(0) \Vert_\infty \leq \frac{3}{4\pi a_0^3} \leq C_1.$$
Similarly, due to \eqref{fieldang} and \eqref{suppcond2} the field satisfies $|E(0,x)| = 0$ for $ \vert x \vert < a_0 - \delta_r$, while for $\vert x \vert > a_0 + \delta_r$
$$\vert E(0,x) \vert \leq \frac{M}{r^2} \leq \frac{M}{a_0^2} \leq \frac{8}{a_0^3}.$$
Finally, for $a_0 - \delta_r \leq \vert x \vert \leq a_0 + \delta_r$, we have
$$\vert E(0,x) \vert \leq \frac{M}{r^2} \leq \frac{M}{\left(\frac{1}{2} a_0 \right)^2} = \frac{4M}{a_0^2} \leq \frac{32}{a_0^3}.$$
Hence, we find
$$\Vert E(0) \Vert_\infty \leq \frac{32}{a_0^3} \leq C_1.$$
Therefore, we merely need to establish the contrasting inequalities at time $T$ to complete the proof.

Since the trajectories of particle positions are convex, they must each attain a minimum, and we use this construction to create a uniform lower bound over $S(0)$ on the time until particles attain their minima. Because the enclosed mass satisfies $$0 \leq m(t,r) \leq M \leq 8a_0^{-1}$$ for all $t, r \geq 0$, we see from \eqref{charang} that $\mcR(t)$ satisfies
$$0 \leq \ddot{\mcR}(t) - \ell\mcR(t)^{-3} \leq 8a_0^{-1}\mcR(t)^{-2}$$
with $\mcR(0) = r > 0$ and $\dot{\mcR}(0) = w < 0$. 
In order to exclude those particles in $S(0)$ with vanishing angular momentum, we define  $$S_+ = \{ (r, w, \ell) \in S(0): \ell > 0\}.$$
Using Lemma \ref{L1} with $L = \ell$ and $P = 8a_0^{-1}$, we find for each $(r,w,\ell) \in S_+$ a time $T_0(r,w,\ell)$ such that
$$T_0 \geq \frac{r}{\vert w \vert} \left ( 1 - \sqrt{\frac{\ell+8a_0^{-1}r}{r^2w^2 + \ell + 8a_0^{-1}r}} \right )$$
and
$$\dot{\mcR}(t) \leq 0 \quad \mathrm{for} \quad  t \in [0,T_0].$$
Estimating on $S_+$, we use \eqref{rw}, \eqref{l}, and the basic inequality
$$ \frac{1}{A + x} \geq \frac{1}{A} - \frac{x}{A^2}$$
for any $x,A > 0$, in order to arrive at
\begin{eqnarray*}
T_0 & > & \frac{r}{\vert w \vert} \left ( 1 - \frac{\sqrt{\ell + 8a_0^{-1}r}}{r \vert w \vert} \right )\\
& = & \frac{r}{\vert w \vert} - \frac{\sqrt{\ell + 8a_0^{-1}r}}{w^2}\\
& \geq & \frac{a_0 -\delta_r}{\vert a_1 \vert + \delta_w} - \frac{\sqrt{3 + 8a_0^{-1}(\frac{3}{2}a_0)}}{\left ( \frac{1}{2} \eps^{-2} \right )^2}\\
& \geq & (a_0 -\delta_r)\left (\frac{1}{\vert a_1 \vert} -  \frac{\delta_w}{a_1^2} \right) - 16\eps^{4}\\
& \geq & \frac{a_0}{\vert a_1 \vert} - \frac{\delta_r}{\vert a_1 \vert} - \frac{a_0 \delta_w}{a_1^2} - 16\eps^4\\
& \geq & \frac{a_0}{\vert a_1 \vert} - \left (\eps^5 + 2\eps^{5} + 16\eps^4\right )\\
& \geq & \frac{a_0}{\vert a_1 \vert} - 20\eps^4 = T.
\end{eqnarray*}

Therefore, $T \in [0, T_0)$ for every $(r,w,\ell) \in S_+$ and we apply Lemma \ref{L2} to find
$$\mcR(T)^2 \leq (r + wT)^2 + (\ell r^{-2} + 8a_0^{-1}r^{-1})T^2.$$
Because $T = \frac{a_0}{\vert a_1 \vert} - 20\eps^4$ this further implies
\begin{eqnarray*}
\mcR(T)^2 & \leq & \left (r + \frac{a_0}{|a_1|}w - 20\eps^4 w \right)^2 + \ell r^{-2} \left ( \frac{a_0}{|a_1|} - 20\eps^4 \right)^2 + 8a_0^{-1}r^{-1}T^2\\
& \leq & \left (r + \frac{a_0}{|a_1|}w \right)^2  +  \ell r^{-2}\left (\frac{a_0}{a_1} \right)^2 + 2|w|\left (20\eps^4 \right ) \left \vert r + \frac{a_0}{|a_1|}w \right \vert \\
& \ & + w^2\left (20\eps^4 \right )^2 
+ \ell r^{-2}\left (20\eps^4 \right )^2 + 8a_0^{-1}r^{-1}T^2.
\end{eqnarray*}

Using the conditions on $(r,w,\ell) \in S(0)$, namely \eqref{suppcond}, \eqref{suppcondrw}, \eqref{rw}, and \eqref{l} this yields 
\begin{eqnarray*}
\mcR(T)^2 & \leq & \frac{\eps^2}{a_1^2} + 2\left ( \frac{3}{2} \eps^{-2} \right )20\eps^4 \frac{\eps}{\vert a_1 \vert} + \left ( \frac{3}{2} \eps^{-2} \right )^2(20\eps^4)^2 \\
& \ &   + 3\left (\frac{1}{2}a_0 \right )^{-2}(20\eps^4)^2 + 8a_0^{-1}\left (\frac{1}{2}a_0 \right )^{-1}\left ( \frac{a_0}{a_1} \right)^2.\\
& \leq & \eps^6 + 60 \eps^5 + 900\eps^4 + \frac{4800}{a_0^2}\eps^{8} + 16\eps^4\\
& \leq & 10000\eps^4.
\end{eqnarray*}
Since this provides a uniform bound on $\mcR(T)$ over the set $S_+$, we take the supremum over all such triples to find
$$\sup_{(r,w,\ell) \in S(0)} \mcR(T, 0, r, w,\ell) = \sup_{(r,w,\ell) \in S_+} \mcR(T, 0, r, w,\ell) \leq 100 \eps^2.$$

Finally, invoking Lemma \ref{L3}, the upper bound on spatial characteristics implies a lower bound on the charge density and therefore using \eqref{mass}
$$ \Vert \rho(T) \Vert_\infty \geq \frac{3M}{4\pi \left ( 100\eps^2 \right)^3 } \geq \frac{1}{200^3a_0\eps^3} \geq C_2.$$
The same lemma also provides a lower bound on the field so that 
$$\Vert E(T) \Vert_\infty \geq \frac{M}{(100\eps^2)^2} \geq \frac{3}{100^2a_0 \eps} \geq C_2,$$
and the proof is complete.
\end{proof}

\subsection{Proof of Theorem~\ref{T2}}
For the second result we choose $f_0\in\mathfrak{K}(a_0,a_1,\epsilon,M)$. Unlike the first result, $T$ will be given here and we may choose $a_0$ and $\vert a_1 \vert$ sufficiently large so that particles are far enough from the origin that the initial large velocities they experience will concentrate them about the origin only near the given time $T$. As before, this behavior implies that the density and field become arbitrarily large at this time.

\begin{proof}
Let $C_1,C_2>0$ and $T > 0$ be given, and define the constant
$$C_0 = 3 + 12\sqrt{1+ C_1T} > 1.$$
Let $\eps > 0$ satisfy
$$\eps < \min\left \{1, \frac{T}{C_0}, \left (\frac{1}{(8C_0)^3}\frac{C_1}{6C_2} \right )^{1/2} \right\}$$ 
and set $$M=C_1,\qquad a_1 = -\eps^{-2}, \qquad  \eta = C_0\eps^3, \qquad \mathrm{and} \quad a_0 = \eps^{-2}(T + \eta)$$
so that $T = \frac{a_0}{\vert a_1 \vert} - \eta$.
These choices along with $\delta_r = \eps^3$ and~\eqref{deltaw} imply $$\delta_w = \frac{2\eps^3}{T + \eta} \leq \frac{2}{T}\eps^3.$$

On $S(0)$, the conditions on $\eps$ further imply $\eta \leq T$ and the useful inequalities
\begin{equation}
\label{rw2}
\left \{
\begin{gathered}
T\eps^{-2} < a_0 - \delta_r < r  < a_0 + \delta_r < 3T\eps^{-2}\\
-\frac{3}{2}\eps^{-2} < a_1 - \delta_w < w < a_1 + \delta_w < -\frac{1}{2} \eps^{-2}.
\end{gathered}
\right.
\end{equation}
Additionally, \eqref{rw2} combined with \eqref{suppcondl} implies a uniform upper bound on the angular momentum on $S(0)$, namely
\begin{equation}
\label{l2}
\ell < \left (\frac{3T\eps^{-2}}{(T + \eta) \eps^{-2}} \right )^2 \eps^2 \leq 9\eps^2 \leq 9.
\end{equation}

Now, since particle trajectories are convex, they must each attain a minimum, and we use this construction to create a uniform lower bound on the time until particles reach their minimum value. Because the enclosed mass satisfies $0 \leq m(t,r) \leq C_1$ for all $t, r \geq 0$, we see that $\mcR(t)$ satisfies
$$0 \leq \ddot{\mcR}(t) - \ell\mcR(t)^{-3} \leq C_1 \mcR(t)^{-2}$$
with $\mcR(0) = r > 0$ and $\dot{\mcR}(0) = w < 0$. 
As in the proof of Theorem \ref{T1}, we must exclude those particles in $S(0)$ with vanishing angular momentum, and thus we again let  $$S_+ = \{ (r, w, \ell) \in S(0): \ell > 0\}.$$
Using Lemma \ref{L1} with $L = \ell$ and $P = C_1$, we find for each $(r,w,\ell) \in S_+$ a time $T_0(r,w,\ell)$ such that
$$T_0 \geq \frac{r}{\vert w \vert} \left ( 1 - \sqrt{\frac{\ell+C_1r}{r^2w^2 + \ell + C_1r}} \right )$$
and
$$\dot{\mcR}(t) \leq 0 \quad \mathrm{for} \quad  t \in [0,T_0].$$
Next, we use \eqref{rw2}, \eqref{l2}, and the basic inequality
$$ \frac{1}{A + x} \geq \frac{1}{A} - \frac{x}{A^2}$$
for any $x,A > 0$ in order to find
\begin{eqnarray*}
T_0 & > & \frac{r}{\vert w \vert} \left ( 1 - \frac{\sqrt{\ell + C_1r}}{r \vert w \vert} \right )\\
& = & \frac{r}{\vert w \vert} - \frac{\sqrt{\ell + C_1r}}{w^2}\\
& \geq & \frac{a_0 -\delta_r}{\vert a_1 \vert + \delta_w} - \frac{\sqrt{9 + 3C_1T\eps^{-2}}}{\left ( \frac{1}{2} \eps^{-2} \right )^2}\\
& \geq & (a_0 -\delta_r)\left (\frac{1}{\vert a_1 \vert} -  \frac{\delta_w}{a_1^2} \right) - 4\eps^4\sqrt{9 + 3C_1T\eps^{-2}}\\
& \geq & \frac{a_0}{\vert a_1 \vert} - \frac{\delta_r}{\vert a_1 \vert} - \frac{a_0 \delta_w}{a_1^2} - 4\eps^3\sqrt{9\eps^2 + 3C_1T}\\
& \geq & \frac{a_0}{\vert a_1 \vert} - \left (\eps^5 + 2\eps^{5} + 4\eps^3\sqrt{9 + 3C_1T} \right )\\
& \geq & \frac{a_0}{\vert a_1 \vert} - \eta = T.
\end{eqnarray*}

Therefore, $T \in [0, T_0)$ for every $(r,w,\ell) \in S_+$ and we apply Lemma \ref{L2} to find
$$\mcR(T)^2 \leq (r + wT)^2 + (\ell r^{-2} + C_1r^{-1})T^2.$$
Because $T = \frac{a_0}{|a_1|} - \eta$ this further implies
\begin{eqnarray*}
\mcR(T)^2 & \leq & \left (r + \frac{a_0}{|a_1|}w - \eta w \right)^2 + \ell r^{-2} \left ( \frac{a_0}{|a_1|} - \eta \right)^2 + C_1r^{-1}T^2\\
& = & \left (r + \frac{a_0}{|a_1|}w \right)^2  +  \ell r^{-2}\left (\frac{a_0}{a_1} \right)^2\\
& \ & +2|w|\eta \left \vert r + \frac{a_0}{|a_1|}w \right \vert + w^2\eta^2 +  \ell r^{-2}\eta^2 + C_1r^{-1}T^2.
\end{eqnarray*}

Using \eqref{suppcond}, \eqref{suppcondrw}, \eqref{rw2}, and \eqref{l2} this yields
\begin{eqnarray*}
\mcR(T)^2 & \leq & \frac{\eps^2}{a_1^2} + 2\left ( \frac{3}{2} \eps^{-2} \right )C_0\eps^3 \frac{\eps}{\vert a_1 \vert} + \left ( \frac{3}{2} \eps^{-2} \right )^2(C_0\eps^3)^2 \\
& \ &   + 9\left (T\eps^{-2} \right )^{-2}(C_0\eps^3)^2 + C_1\left (T\eps^{-2} \right )^{-1}T^2\\
& \leq & \eps^6 + 3C_0 \eps^4 + 3C_0^2\eps^2 + \frac{9C_0^2}{T^2}\eps^{10} + C_1T\eps^2\\
& \leq & 64C_0^2\eps^2.
\end{eqnarray*}
Since this provides a uniform bound on $\mcR(T)$ over the set $S_+$, we take the supremum over all such triples to find
$$\sup_{(r,w,\ell) \in S(0)} \mcR(T, 0, r, w,\ell) = \sup_{(r,w,\ell) \in S_+} \mcR(T, 0, r, w,\ell) \leq 8C_0 \eps.$$


Finally, using Lemma \ref{L3}, the upper bound on spatial characteristics implies a lower bound on the density and therefore,
$$ \Vert \rho(T) \Vert_\infty \geq \frac{3C_1}{\left ( 8C_0\eps \right)^3 } \geq \frac{3C_1}{(8C_0)^3\eps^2 } \geq C_2.$$
The same lemma also provides a lower bound on the field so that 
$$\Vert E(T) \Vert_\infty \geq \frac{C_1}{\left ( 8C_0\eps \right)^2 } \geq 48C_0C_2 \geq C_2,$$
and this completes the proof.
\end{proof}

\section{Lemmas}\label{lemmas}
The first lemma uses the convex nature of particle characteristics to estimate their minimal value and the corresponding time at which it is achieved.

\begin{lemma}
\label{L1}
Let $L > 0$, $P \geq 0$, $y_0 > 0$ and $y_1 < 0$ be given.
Assume $y \in C^2\left ([0,\infty);(0,\infty)\right )$ satisfies
$$ 0 \leq \ddot{y}(t) - Ly(t)^{-3} \leq Py(t)^{-2}$$
for all $t > 0$ with $y(0) = y_0$ and $\dot{y}(0) = y_1$.
Then, we have the following: 
\begin{enumerate}
\item There exists a unique $T_0 > 0$ such that
$$\dot{y}(t)  < 0 \ \mathrm{for} \ t \in [0,T_0),$$ 
$$\dot{y}(T_0)  =0, \ \mathrm{and}$$
$$\dot{y}(t)  > 0 \ \mathrm{for} \ t \in (T_0,\infty).$$

\item Furthermore, define $$y_* = y_0\sqrt{\frac{L + Py_0}{y_0^2y_1^2 + L + Py_0}}.$$
Then, 
$$y(T_0) \leq y_* \quad \mathrm{and} \quad T_0 \geq  \frac{y_0 - y_*}{\vert y_1 \vert}.$$
\end{enumerate}
\end{lemma}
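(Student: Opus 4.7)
The plan is to use the convexity of $y$ implied by the hypothesis together with an appropriate energy-like monotonicity argument. From $\ddot{y} \geq Ly^{-3} > 0$ the function $y$ is strictly convex and $\dot{y}$ is strictly increasing. Initially $\dot{y}(0) = y_1 < 0$, so as long as $\dot{y}$ remains negative $y$ is decreasing, hence $y(t) \leq y_0$ and $\ddot{y}(t) \geq L y_0^{-3}$. Integrating this uniform lower bound forces $\dot{y}$ to become positive by time $|y_1| y_0^3 / L$, so there must exist a unique $T_0 > 0$ with $\dot{y}(T_0) = 0$; strict monotonicity of $\dot{y}$ then gives the sign behavior stated in part (1) and the uniqueness of $T_0$.

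For the upper bound on $y(T_0)$ in part (2), I would exploit the fact that $y \leq y_0$ on $[0, T_0]$. This allows the $Py^{-2}$ term to be absorbed into a $y^{-3}$-type expression, yielding
\begin{equation*}
\ddot{y} \leq L y^{-3} + P y^{-2} \leq (L + P y_0)\, y^{-3}, \qquad t \in [0, T_0].
\end{equation*}
Multiplying by $\dot{y} \leq 0$ reverses the inequality, and both sides are then exact time derivatives, giving
\begin{equation*}
\frac{d}{dt}\!\left( \frac{1}{2}\dot{y}^2 + \frac{L + P y_0}{2 y^2} \right) \geq 0.
\end{equation*}
Integrating from $0$ to $T_0$ and using $\dot{y}(T_0) = 0$ yields
\begin{equation*}
\frac{L + P y_0}{y(T_0)^2} \geq y_1^2 + \frac{L + P y_0}{y_0^2},
\end{equation*}
which rearranges precisely to $y(T_0) \leq y_*$.

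For the lower bound on $T_0$, I would use the other side of the hypothesis: $\ddot{y} \geq 0$ makes $\dot{y}$ non-decreasing, so $\dot{y}(t) \geq y_1$ on $[0, T_0]$. Integrating gives $y(t) \geq y_0 - |y_1| t$, and evaluating at $t = T_0$ combined with the already-established bound $y(T_0) \leq y_*$ produces $T_0 \geq (y_0 - y_*)/|y_1|$.

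The main obstacle is identifying the correct energy-type functional. The naive guess $\tfrac{1}{2}\dot{y}^2 + \tfrac{L}{2 y^2}$ handles only the leading $Ly^{-3}$ contribution and leaves a residual $Py^{-2}$ piece whose primitive $P/y$ does not combine algebraically into the target form of $y_*$. The key observation is that on $[0, T_0]$ the free \emph{a priori} bound $y \leq y_0$ lets one absorb $Py^{-2}$ into $P y_0 y^{-3}$, producing a purely $y^{-3}$-type upper bound whose potential is quadratic in $1/y$ and reproduces $y_*$ exactly.
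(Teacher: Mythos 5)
Your proposal is correct and follows essentially the same route as the paper: the same convexity/contradiction argument for part (1), an energy-type monotonicity estimate for $y(T_0)\leq y_*$, and integration of $\dot{y}\geq y_1$ for the lower bound on $T_0$. The only (harmless) variation is that you absorb the $Py^{-2}$ term pointwise via $Py^{-2}\leq Py_0\,y^{-3}$ \emph{before} integrating, whereas the paper integrates both potential terms and then uses $y_0^{-1}+y(t)^{-1}\geq 2y_0^{-1}$ to reach the same constant $L+Py_0$.
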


\begin{proof}
To begin, define
$$T_0 = \sup \{t \geq 0 : \dot{y}(t) \leq 0 \}$$
and note that $y_1 < 0$ implies $T_0 > 0$.
We first show that $T_0 < \infty$. For the sake of contradiction, assume $T_0 =\infty$. Then, we have $\dot{y}(t) \leq 0$ for all $t \geq 0$ and thus $y(t) \leq y_0$ for all $t \geq 0$.  From the lower bound on $\ddot{y}$, we find
$$ \ddot{y}(t) \geq Ly(t)^{-3} \geq Ly_0^{-3}$$
and hence for all $t \geq 0$
$$\dot{y}(t) \geq Ly_0^{-3}t + y_1.$$
Taking $t > \frac{-y_1 y_0^3}{L}$ implies $\dot{y}(t) > 0$, thus contradicting the assumption that $T_0 = \infty$, and we conclude that $T_0$ must be finite.

Next, for $t \in [0,T_0]$, we multiply the differential inequality
$$\ddot{y}(t) - Ly(t)^{-3} \leq Py(t)^{-2}$$
by $-\dot{y}(t)$ and integrate over $[0,t]$ to find
\begin{equation}
\label{partenergy}
\dot{y}(t)^2 + Ly(t)^{-2} + 2Py(t)^{-1} \geq y_1^2 + Ly_0^{-2} + 2Py_0^{-1}.
\end{equation}
Using the decreasing nature of $y$ on this interval, so that $y(t) \leq y_0$, we find
$$ y_0^{-2} - y(t)^{-2} \leq 0 \quad \mathrm{and} \quad y_0^{-1} + y(t)^{-1} \geq 2 y_0^{-1}$$
and within \eqref{partenergy} this implies
\begin{eqnarray*}
\dot{y}(t)^2 & \geq & y_1^2 + L(y_0^{-2} - y(t)^{-2}) +2 P(y_0^{-1}- y(t)^{-1})\\
& = & y_1^2 + \left (L + \frac{2P}{y_0^{-1}+y(t)^{-1}} \right) (y_0^{-2} - y(t)^{-2})\\
& \geq & y_1^2 + (L + Py_0) (y_0^{-2} - y(t)^{-2}).
\end{eqnarray*}
Thus, we have for all $t \in [0,T_0]$
\begin{equation}
\label{upper1}
\dot{y}(t)^2 \geq y_1^2 + (L + Py_0) (y_0^{-2} - y(t)^{-2}).
\end{equation}
Evaluating this inequality at $t = T_0$ yields
$$y_1^2 + (L + Py_0) (y_0^{-2} - y(T_0)^{-2}) \leq 0$$
and rearranging gives $y(T_0) \leq y_*$ with
$$y_* = y_0\sqrt{\frac{L + Py_0}{y_0^2y_1^2 + L + Py_0}}.$$

Next, the lower bound in the differential inequality implies
$$\ddot{y}(t) \geq Ly(t)^{-3} \geq 0,$$
and thus $\dot{y}(t) \geq y_1$
for all $t \in [0,T_0].$
Integrating over $[0,T_0]$ produces
$y(T_0) - y_0 \geq y_1 T_0$
and since $y_1 < 0$, we find
$$ T_0 \geq \frac{1}{y_1} (y(T_0) - y_0) \geq \frac{y_* - y_0}{y_1} = \frac{y_0 - y_*}{\vert y_1 \vert}.$$
Finally, the convexity of $y(t)$ implies the uniqueness of $T_0$ and the proof is complete.
\end{proof}

Next, we state and prove a result that provides an upper bound on particle positions over the interval of time on which they remain radially decreasing. This bound allows us to relate particle trajectories at any time to their starting positions and momenta, as well as their angular momentum and the total mass.

\begin{lemma}
\label{L2}
Let $y(t)$ and $T_0 > 0$ satisfy the conditions of Lemma \ref{L1}.  Then, for all $t \in [0,T_0]$, we have
$$y(t)^2 \leq (y_0 + y_1t)^2 + (Ly_0^{-2} + Py_0^{-1} )t^2.$$
\end{lemma}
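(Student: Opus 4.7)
The plan is to compare $y(t)$ with an auxiliary function $\tilde y(t) := \sqrt{(y_0 + y_1 t)^2 + (L y_0^{-2} + P y_0^{-1})t^2}$, which is exactly the square root of the claimed upper bound. First I would verify that $\tilde y^2$ is a quadratic polynomial in $t$ with initial value $y_0^2$, initial derivative $2 y_0 y_1$, and minimum value $y_*^2$ attained at some $\tilde T_0 > 0$; thus $\tilde y$ strictly decreases on $[0, \tilde T_0]$ and strictly increases thereafter. Differentiating $\tilde y^2$ twice gives $\dot{\tilde y}^2 + \tilde y \ddot{\tilde y} = y_1^2 + L y_0^{-2} + P y_0^{-1}$, and eliminating $t$ in favor of the radial value $\sigma := \tilde y$ (on the decreasing branch) yields
$$\dot{\tilde y}^2 = y_1^2 + L y_0^{-2} + P y_0^{-1} - (L + P y_0)\sigma^{-2}.$$

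The main step is a pointwise comparison of radial speeds at equal levels. Applying \eqref{partenergy} at the time $s$ when $y(s) = \sigma \in [y_*, y_0]$ gives
$$\dot y(s)^2 \geq y_1^2 + L(y_0^{-2} - \sigma^{-2}) + 2 P(y_0^{-1} - \sigma^{-1}).$$
Subtracting from this the expression above for $\dot{\tilde y}^2$, the $y_1^2$, $Ly_0^{-2}$, and $L\sigma^{-2}$ contributions cancel, and the remainder factors as a perfect square:
$$\dot y(s)^2 - \dot{\tilde y}^2 \geq P\bigl[y_0^{-1} - 2\sigma^{-1} + y_0\sigma^{-2}\bigr] = \frac{P(y_0 - \sigma)^2}{y_0 \sigma^2} \geq 0.$$

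Since $|\dot y| \geq |\dot{\tilde y}|$ at every common level $\sigma \in [y_*, y_0]$, and both trajectories descend strictly from $y_0$, the time for $y$ to reach any such $\sigma$ is no greater than the time for $\tilde y$ to reach it; equivalently $y(t) \leq \tilde y(t)$ as long as $y(t) \geq y_*$. For any later time $t \in [0, T_0]$ with $y(t) < y_*$, one still has $\tilde y(t) \geq y_*$ (since $y_*$ is the global minimum of $\tilde y$), so $y(t) < y_* \leq \tilde y(t)$. Squaring yields $y(t)^2 \leq \tilde y(t)^2$, which is exactly the desired bound.

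The main obstacle is identifying the correct auxiliary function $\tilde y$ (whose square is the squared distance from the origin of a free two-dimensional particle launched from $(y_0, 0)$ with velocity $(y_1, \sqrt{L y_0^{-2} + P y_0^{-1}})$) and spotting the perfect-square identity in $y_0 - \sigma$. A direct pointwise comparison $(y^2)''(t) \leq (\tilde y^2)''(t)$ does not work: the best upper bound on $(y^2)''$ one obtains from $\ddot y \leq L y^{-3} + P y^{-2}$ together with \eqref{partenergy}-type inequalities involves $P y(t)^{-1}$ instead of the desired constant $P y_0^{-1}$, and the residual cannot be absorbed pointwise, which is why the phase-space (level-by-level) comparison is essential.
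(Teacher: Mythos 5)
Your proof is correct, and it rests on exactly the same core estimate as the paper's: your inequality $\dot y^2 - \dot{\tilde y}^2 \geq P(y_0-\sigma)^2/(y_0\sigma^2)$ at a common level $\sigma$ is, after unwinding, precisely the paper's passage from \eqref{partenergy} to \eqref{upper1}, i.e.\ $\dot y^2 \geq y_1^2 + (L+Py_0)(y_0^{-2}-y^{-2})$, since your $\tilde y$ satisfies this relation with equality. Where you diverge is in how the time-domain bound is extracted: the paper multiplies \eqref{upper1} by $y^2$, separates variables, and integrates the resulting differential inequality explicitly (with a case split on condition \eqref{lb}, resolved by observing that the excluded value is the minimum of the parabola), whereas you run an arrival-time comparison against the explicit free-particle trajectory $\tilde y$ --- faster descent at every level means earlier arrival, hence $y(t)\leq\tilde y(t)$ on the decreasing range, with the regime $y(t)<y_*$ handled trivially because $y_*$ is the global minimum of $\tilde y$. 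Your case split mirrors the paper's \eqref{lb} dichotomy one-for-one. The comparison-of-arrival-times route is arguably cleaner conceptually (it makes the role of $\tilde y$ as the exactly solvable majorant transparent and avoids the integration and the final ``max'' bookkeeping), at the cost of needing one more sentence of rigor: you should state explicitly that $y$ is strictly decreasing on $[0,T_0)$ (from Lemma \ref{L1}) and $\tilde y$ strictly decreasing on $[0,\tilde T_0)$, so that both times-to-level are well defined as integrals $\int_\sigma^{y_0} ds/|\dot{(\cdot)}|$ over the common range of levels, and handle the endpoint level $\sigma=y_*$ (where $\dot{\tilde y}$ vanishes) by continuity. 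Your closing remark that a pointwise comparison of $(y^2)''$ with $(\tilde y^2)''$ fails is accurate: the best available upper bound produces $Py(t)^{-1}\geq Py_0^{-1}$ with the inequality pointing the wrong way.
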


\begin{proof}
As in the proof of Lemma \ref{L1}, we return to \eqref{upper1} and multiply by $y(t)^2$ to find
\begin{equation}
\label{y2energy}
\left [ \frac{1}{2} \frac{d}{dt} ( y(t)^2 ) \right ]^2 \geq y_0^{-2}(y_0^2y_1^2 + L+ Py_0)y(t)^2 - (L + Py_0).
\end{equation}
Now, if 
\begin{equation}
\label{lb}
y(t)^2 > y_0^2 \frac{L + Py_0}{y_0^2y_1^2 + L + Py_0}
\end{equation}
then the right side of \eqref{y2energy} is positive and we find
$$\frac{1}{2} \left \vert \frac{d}{dt} ( y(t)^2 ) \right \vert \geq \sqrt{y_0^{-2}(y_0^2y_1^2 + L+ Py_0)y(t)^2 - (L + Py_0)},$$
which can be rewritten as
$$\frac{\frac{1}{2} \frac{d}{dt} ( y(t)^2 )}{ \sqrt{y_0^{-2}(y_0^2y_1^2 + L+ Py_0)y(t)^2 - (L + Py_0)}} \leq -1.$$
Integrating yields
$$\sqrt{y_0^{-2}(y_0^2y_1^2 + L+ Py_0)y(t)^2 - (L + Py_0)} - \vert y_1 \vert y_0 \leq - y_0^{-2} (y_0^2y_1^2 + L + Py_0) t$$
so that
$$y(t)^2 \leq y_0^2(y_0^2y_1^2 + L+ Py_0)^{-1} \left [L + Py_0 + (y_1y_0 + y_0^{-2} (y_0^2y_1^2 + L+ Py_0)t )^2\right ]$$
and after some algebra this becomes
$$y(t)^2 \leq (y_0 + y_1t)^2 + (Ly_0^{-2} + Py_0^{-1})t^2.$$
Including the assumption \eqref{lb}, this implies
$$y(t)^2 \leq \max \left \{y_0^2 \frac{L + Py_0}{y_0^2y_1^2 + L + Py_0},  (y_0 + y_1t)^2 + (Ly_0^{-2} + Py_0^{-1})t^2 \right \}.$$
Finally, the upper bound in this estimate from condition \eqref{lb} can be removed by noting that $y_0^2 \frac{L + Py_0}{y_0^2y_1^2 + L + Py_0}$ is, in fact, the minimum of the parabola in $t$, which occurs at the time $$t_{min} = \frac{-y_1y_0^3}{y_0^2y_1^2 + L + Py_0}$$ and the conclusion follows.
\end{proof}

Our final lemma provides lower bounds on the charge density and electric field in terms of the total mass and the position of the particle on the support of $f_0$ that is furthest from the origin.

\begin{lemma}
\label{L3}
Let $f(t,r, w, \ell)$ be a spherically-symmetric solution of \eqref{VP} with associated charge density $\rho(t,r)$ and electric field $E(t,x)$, and let $\left (\mcR(t, 0, r, w, \ell), \mcW(t, 0, r, w, \ell), \mcL(t, 0, r, w, \ell) \right)$ be a characteristic solution of \eqref{charang}.  
If at some $T \geq 0$ we have
$$ \sup_{(r, w, l) \in S(0)} \mcR(T, 0, r, w, \ell) \leq B,$$ then
$$\Vert \rho(T) \Vert_\infty \geq \frac{3M}{4\pi B^3}$$
and
$$\Vert E(T) \Vert_\infty \geq \frac{M}{B^2}.$$
\end{lemma}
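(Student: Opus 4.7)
The plan is to use the fact that the Vlasov equation transports $f$ along characteristics, so the hypothesis places all of the mass inside a ball of radius $B$ at time $T$; the two lower bounds then follow from an averaging argument for $\rho$ and from Gauss's law (in the form~\eqref{fieldang}) for $E$.

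First, I would recall that $f$ is constant along the characteristic flow $(r,w,\ell)\mapsto(\mcR(T,0,r,w,\ell),\mcW(T,0,r,w,\ell),\mcL(T,0,r,w,\ell))$ of~\eqref{charang}, so that $(r',w',\ell')\in S(T)$ if and only if $(r',w',\ell')$ is the image at time $T$ of some $(r,w,\ell)\in S(0)$. By the assumed bound on $\mcR(T,\cdot)$, every such image satisfies $r'\leq B$. Consequently $\rho(T,r')=0$ for $r'>B$ by~\eqref{rhoang}, i.e.\ the charge density is supported in the ball $\{|x|\leq B\}$.

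For the density bound, since the total mass is conserved, I would write
\[
M=\int_{\R^3}\rho(T,x)\,dx=\int_{|x|\leq B}\rho(T,x)\,dx\leq \|\rho(T)\|_\infty\cdot\frac{4\pi B^3}{3},
\]
which rearranges to $\|\rho(T)\|_\infty\geq 3M/(4\pi B^3)$. For the field bound, the same support property implies that the enclosed mass satisfies
\[
m(T,r)=4\pi\int_0^r s^2\rho(T,s)\,ds=M\qquad\text{for every }r\geq B.
\]
Plugging this into~\eqref{fieldang}, any $x$ with $|x|=B$ gives $|E(T,x)|=m(T,B)/B^2=M/B^2$, hence $\|E(T)\|_\infty\geq M/B^2$.

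There is essentially no obstacle here: the proof is a direct bookkeeping argument once one observes that the Vlasov flow preserves supports and that mass is conserved. The only small care needed is to verify that the characteristic description of $S(T)$ really does cover all of the support (so that the bound $\mcR(T)\leq B$ over $S(0)$ translates into a support statement for $\rho(T)$), and that the evaluation of $m(T,r)$ at $r=B$ is justified by the continuity of $m(T,\cdot)$, both of which are immediate consequences of the spherical reduction already set up in~\eqref{vlasovang}--\eqref{fieldang}.
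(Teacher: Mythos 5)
Your proof is correct and follows essentially the same approach as the paper: both arguments reduce to the observation that the characteristic flow confines the support of $f(T)$ (hence of $\rho(T)$) to radii at most $B$, then use mass conservation to bound $\Vert\rho(T)\Vert_\infty$ by averaging over the ball and to evaluate the enclosed mass in \eqref{fieldang}. The only cosmetic difference is that you evaluate the field at radius $B$ where $m(T,B)=M$, whereas the paper evaluates at the exact supremum $\mfR(T)\leq B$ after verifying $m(T,\mfR(T))=M$ via the measure-preserving change of variables; both yield the same bound.
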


\begin{proof}
Let $f(t,r,w,\ell)$ be a given spherically-symmetric solution with initial data $f_0(r,w,\ell)$. As previously mentioned, one may integrate the Vlasov equation \eqref{vlasovang} over phase space to find that the total mass of the system in conserved in these coordinates, namely
$$ M= 4\pi^2 \iiint_{S(0)} f_0(r,w,\ell) \ d\ell dw dr = 4\pi^2 \iiint_{S(t)} f(t,r,w,\ell) \ d\ell dw dr$$
for every $t \geq 0$.
Due to the bound on spatial characteristics, it follows that 
$S(T) \subset [0,B] \times \mathbb{R} \times [0,\infty)$. 
Hence, using the radial form of the density in \eqref{rhoang}, we find
\begin{eqnarray*}
M & = & 4\pi^2 \iiint_{S(T)} f(T,r,w,\ell) \ d\ell dw dr.\\
& \leq & 4\pi^2 \int_0^B \int_{-\infty}^\infty \int_0^\infty f(T,r,w,\ell) \ d\ell dw dr.\\
& = & 4\pi \int_0^B r^2 \rho(T,r) \ dr\\
& \leq & 4\pi \Vert \rho(T) \Vert_\infty \left (\int_0^B r^2 \ dr \right ).
\end{eqnarray*}
Integrating and rearranging the inequality yields
$$\Vert \rho(T) \Vert_\infty \geq \frac{3 M}{4\pi B^3}.$$

To prove the second conclusion, we use a similar argument. Let $$\mfR(T) =\sup_{(r, w, \ell) \in S(0)} \mcR(T, 0, r, w, \ell).$$ From the Vlasov equation, we have for every $s \geq 0$
$$ f(s,\mcR(s, 0, r, w, \ell), \mcW(s, 0, r, w, \ell), \mcL(s, 0, r, w, \ell)) = f_0(r, w, \ell).$$
Then, 
using the change of variables
$$ \left \{
\begin{gathered}
r = \mcR(0,T, \tilde{r}, \tilde{w}, \tilde{\ell})\\
w = \mcW(0,T, \tilde{r}, \tilde{w}, \tilde{\ell})\\
\ell = \mcL(0,T, \tilde{r}, \tilde{w}, \tilde{\ell})
\end{gathered}
\right. $$ 
along with the inverse mapping
$$ \left \{
\begin{gathered}
\tilde{r} = \mcR(T, 0, r, w, \ell)\\
\tilde{w} = \mcW(T, 0, r, w, \ell)\\
\tilde{\ell} = \mcL(T, 0, r, w, \ell)
\end{gathered}
\right. $$ 
and the well-known measure-preserving property (cf. \cite{Glassey}) which guarantees
$$\left \vert \frac{\partial(r, w, \ell)}{\partial (\tilde{r}, \tilde{w}, \tilde{\ell})} \right \vert = 1,$$
it follows that
\begin{equation}
\label{mpp}
\int_0^{\mfR(T)} \int_0^\infty \int_{-\infty}^\infty f(T, \tilde{r}, \tilde{w}, \tilde{\ell}) \ d\tilde{w} d\tilde{\ell} d\tilde{r} = \int_0^{\mfR(0)} \int_0^\infty \int_{-\infty}^\infty f_0(r, w, \ell) \ dw d\ell dr.
\end{equation}
Now, from \eqref{fieldang} we find
$$\vert E(T, \mfR(T))\vert = \frac{m(T, \mfR(T))}{\mfR(T)^2}.$$
Inserting \eqref{rhoang} into \eqref{massang} and using \eqref{mpp}, we have
\begin{eqnarray*}
m(T, \mfR(T)) & = & 4\pi^2 \int_0^{\mfR(T)} \int_0^\infty \int_{-\infty}^\infty f(T, \tilde{r}, \tilde{w}, \tilde{\ell}) \ d\tilde{w} d\tilde{\ell} d\tilde{r}\\
& = & 4\pi^2 \int_0^{\mfR(0)} \int_0^\infty \int_{-\infty}^\infty f_0(r, w, \ell) \ dw d\ell dr\\
& = & 4\pi^2 \iiint_{S(0)} f_0(r, w, \ell) \ dw d\ell dr\\
& = & M.
\end{eqnarray*}
Therefore, using the condition on the spatial characteristics
$$\vert E(T, \mfR(T))\vert = \frac{M}{\mfR(T)^2} \geq \frac{M}{B^2}.$$
Finally, since $E(T,r)$ obtains this value at some $r>0$, we have
$$\Vert E(T) \Vert_\infty \geq \frac{M}{B^2}.$$
and the proof is complete.
\end{proof}

%
%

\end{document}